\newcommand{\guio}[1]{\nobreakdash-\hspace{0pt}}
\numberwithin{equation}{section}
\newtheorem{theorem}{Theorem}[section]
\newtheorem{lemma}[theorem]{Lemma}
\newtheorem{prop}[theorem]{Proposition}
\theoremstyle{definition}
\newtheorem{example}[theorem]{Example}
\newtheorem{remark}[theorem]{Remark}
\newcommand{\R}{\mathbb{R}}
\renewcommand{\S}{\mathbb{S}}
\newcommand{\supp}{\operatorname{supp}}
\newcommand{\HH}{\mathcal{H}^2}
\newcommand{\mres}{\mathbin{\vrule height 1.6ex depth 0pt width
0.13ex\vrule height 0.13ex depth 0pt width 1.3ex}}
\title[Explicit minimisers for anisotropic Coulomb energies in 3D]
{Explicit minimisers\\ for anisotropic Coulomb energies in 3D
}
\author[J.~Mateu]{J.~Mateu}
\author[M.G.~Mora]{M.G.~Mora}
\author[L.~Rondi]{L.~Rondi}
\author[L.~Scardia]{L.~Scardia}
\author[J.~Verdera]{J.~Verdera}
\address[J. Mateu]{Department de Matem\`atiques, Universitat Aut\`onoma de Barcelona, and Centre de Recerca Matem\`atica, Catalonia}
\email{mateu@mat.uab.cat}
\address[M.G. Mora]{Dipartimento di Matematica, Universit\`a di Pavia, Italy}
\email{mariagiovanna.mora@unipv.it}
\address[L. Rondi]{Dipartimento di Matematica, Universit\`a di Pavia, Italy}
\email{luca.rondi@unipv.it}
\address[L. Scardia]{Department of Mathematics, Heriot-Watt University, United Kingdom}
\email{L.Scardia@hw.ac.uk}
\address[J. Verdera]{Department de Matem\`atiques, Universitat Aut\`onoma de Barcelona, and Centre de Recerca Matem\`atica, Catalonia}
\email{jvm@mat.uab.cat}
\date{}
\begin{document}

\begin{abstract}
In this paper we consider a general class of anisotropic energies in three dimensions and give a complete characterisation of their minimisers. We show that, depending on the Fourier transform of the interaction potential, the minimiser is either the normalised characteristic function of an ellipsoid or a measure supported on a two-dimensional ellipse. In particular, it is always an ellipsoid if the transform is strictly positive, while when the Fourier transform is degenerate both cases can occur. Finally, we show an explicit example where loss of dimensionality of the minimiser does occur.

\bigskip

\noindent\textbf{AMS 2010 Mathematics Subject Classification:}  31A15 (primary); 49K20 (secondary)

\medskip

\noindent \textbf{Keywords:} nonlocal energy, potential theory, anisotropic interaction
\end{abstract}

\maketitle

\section{Introduction}
We consider the nonlocal interaction energy 
\begin{equation}\label{en:general}
I(\mu):= \int_{\R^3} (W\ast\mu)(x) \,d\mu(x) +\int_{\R^3}|x|^2 \,d\mu(x),
\end{equation}
defined on probability measures $\mu\in\mathcal{P}(\R^3)$, where the potential $W$ is of the form 
\begin{equation}\label{potdef}
W(x):=\frac1{|x|}\Psi\left(\frac{x}{|x|}\right),\quad x\in \R^3,\ x\neq 0,
\end{equation}
 with the profile $\Psi$
even, strictly positive, and smooth enough. Note that $W$ is an anisotro\-pic extension of the classical Coulomb potential, which corresponds to a constant profile~$\Psi$.

Our {\em main result} states that if the Fourier transform of $W$ is nonnegative, then the corresponding energy $I$ has a unique minimiser which is either 
\begin{equation}\label{ellipsoidlaw-intro}
\mu_E(x):=\frac{\chi_E(x)}{|E|}, 
\end{equation}
for an ellipsoid $E\subset \R^3$, or, up to a rotation,
\begin{equation}\label{s-ellipsoidlaw-intro}
\mu_{\tilde{E}}(x):=\frac{3}{2\pi a_1a_2}\sqrt{1-\frac{x_1^2}{a_1^2}-\frac{x_2^2}{a_2^2}}\,  \mathcal{H}^2\mres \tilde{E}\otimes \delta_0(x_3),
\end{equation}
for an ellipse $\tilde{E}\subset \R^3$  with semi-axes of length $a_1$ and $a_2$ on the coordinate axes of the $x_1x_2$-plane. 
We refer to the measure in \eqref{ellipsoidlaw-intro} as an {\em ellipsoid law} and to the lower-dimensional measure in \eqref{s-ellipsoidlaw-intro} as a {\em semi-ellipsoid law}. 

More precisely, we show in Theorem \ref{Mainthm} that if the Fourier transform of $W$ is strictly positive, then the minimiser is an ellipsoid law, hence a measure with a three-dimensional support. If instead the Fourier transform of $W$ is not strictly positive, the minimiser is either an ellipsoid law, or a semi-ellipsoid law, and both cases can occur, as we show in Example ~\ref{ex:loss-dim}.

We recall that a similar phenomenon occurs in two dimensions. In that case, if the Fourier transform of the potential is not strictly positive, the minimiser is   either the normalised characteristic function of an ellipse, which we call an ellipse law, or, up to a rotation, the semi-circle law measure 
\begin{equation*}
\mu_{\textrm{sc}}(x)=\frac1\pi\sqrt{2-x_1^2} \, \mathcal{H}^1\mres(-\sqrt 2, \sqrt 2)\otimes\delta_0(x_2),
\end{equation*}
and both cases are possible. Note that in the three-dimensional case degeneracy of the support can happen only on a two-dimensional set, and not on a set of dimension lower than two.

Our main result Theorem~\ref{Mainthm} is the most general result to date for {\em three-dimensional anisotropic} energies, and provides a full characterisation of their  minimisers in both the non-degenerate and degenerate cases, under minimal assumptions on $W$.

The motivation of our paper is twofold.  There is a large recent literature on nonlocal energies, due to the pivotal role they play in describing the behaviour of large systems of interaction particles, in a variety of applications. Traditionally, the focus of the mathematical literature on nonlocal energies has been on {\em radially symmetric} potentials, which model interactions depending on the mutual distance between the particles (see e.g. \cite{BCLR13, CCP15, CHV19, CSW-preprint, FYK11, SST}). The mathematical study of anisotropic potentials, despite their natural occurrence in modelling interactions where a preferred direction of interaction is present, has on the other hand been very limited until recently. The potential \eqref{potdef} is the natural anisotropic extension of the Coulomb kernel. Our main motivation is the characterisation of the minimisers of the corresponding anisotropic energies. In particular, the issue of determining the {\em dimension} of the support of minimisers has so far remained elusive.  

Also, ellipses and more generally {\em ellipsoids} play a special role in several problems: as minimisers of Coulomb and dislocation energies \cite{Fro, MRS, CMMRSV, CMMRSV2}, as special solutions (vortex patches) of the Euler equations \cite{Joans}, as the only bounded coincidence sets (with non-empty interior) of global solutions of the obstacle problem \cite{Dive31}, as special sets in Calder\'on-Zygmund theory \cite{MOV}. It is then natural to try and characterise the potentials for which minimisers of energies like \eqref{en:general} are ellipsoids.

In the recent paper \cite{CS2d}, Carrillo and Shu gave a complete answer to the minimality of ellipses in two dimensions, and partially solved the dimensionality conundrum. It is indeed proved in \cite{CS2d} that, under the two-dimensional analogue of the assumptions we make in Theorem~\ref{Mainthm}, if the potential has a strictly positive Fourier transform, then the minimiser is a non-degenerate ellipse. If instead the Fourier transform vanishes in some direction, then the minimiser is either an ellipse law or a semi-circle law measure, and both cases can occur. In this paper we prove the analogous result in the three-dimensional case by means of a different approach which, spelled out in the two-dimensional case, provides an alternative proof of the result in \cite{CS2d}. We believe that our new approach is robust enough to be generalised to higher dimension. This is not a line of investigation that we pursue here, but will be addressed in a forthcoming paper.

Compared with the most recent results for three-dimensional anisotropic energies, \cite{MMRSV-Pert} and \cite{CS3d}, the present paper represents a substantial improvement in several directions. In our previous work \cite{MMRSV-Pert} we proved the minimality of ellipsoids under the additional  requirement that the potential $W$ is a small perturbation of the Coulomb potential $W_{\textrm{C}}$. Since the perturbation assumption guarantees the strict positivity of the Fourier transform of $W$, the analysis there was limited to a special case of our result in the non-degenerate case. We now {\em lift the perturbation assumption}, and also allow for potentials whose Fourier transform is only nonnegative, hence facing the issue of loss of dimensionality for the minimiser. 
As for \cite{CS3d}, the approach there was heavily based on the two-dimensional case \cite{CS2d}. Indeed, the authors impose strong symmetry assumptions on the potentials, which reduce the complexity of the problem to a two-dimensional setting. 
By following a different approach, in Theorem \ref{Mainthm} we {\em remove the additional symmetry assumptions} in \cite{CS3d}, and provide a full characterisation of the minimisers in the general setting. Moreover, we also show that the minimiser cannot concentrate on a one-dimensional set, a case that was not excluded in~\cite{CS3d}.

\subsection{Literature review} The recent rise of interest in {\em anisotropic} energies has been propelled by the works \cite{MRS,CMMRSV} (see also \cite{MMRSV20,MMRSVIzv}), where the two-dimensional energy 
\begin{equation*}
I_\alpha(\mu):=\int_{\R^2}(W_\alpha \ast \mu)(x) d\mu(x) +\int_{\R^2}|x|^2 \,d\mu(x),
\end{equation*}
was considered. Here the interaction potential is $W_\alpha(x)= W_{\textrm{C}}(x) + \alpha(x_1/|x|)^2$, $\alpha\in [-1,1]$. 

The case $\alpha=1$ is motivated by the study of edge dislocations in metals, lattice defects whose motion is restricted to a direction $\mathbf{b}\in \mathbb{S}^1$ in the lattice. The potential $W_1$ indeed describes the interaction among dislocations in the case $\mathbf{b}=e_1$, and the corresponding energy $I_1$ models their interaction at a continuum scale, where $\mu$ represents the macroscopic density of such defects. 

The energy $I_\alpha$ admits a unique minimiser, which has been fully characterised in \cite{MRS,CMMRSV}: For $\alpha\in (-1,1)$ it is the normalised characteristic function of an ellipse, and it degenerates from an ellipse to the semi-circle law measure on the vertical axis for $\alpha=1$, and on the horizontal axis for $\alpha=-1$. 

The study of the energy $I_\alpha$ prompted several compelling questions, and motivated the analysis in \cite{CMMRSV2,MMRSV-Pert, MMRSV-Pert2,CS2d, CS3d} and in the current paper.  A first question is whether the previous analysis is bound to the special structure of $W_\alpha$ or can be extended to more general anisotropies. In \cite{MMRSV-Pert} we gave a partial answer to this question in arbitrary dimension. We proved that ellipsoids arise as energy minimisers whenever  $W$ is a small, even perturbation of the Coulomb potential $W_{\textrm{C}}$ with the same homogeneity of $W_{\textrm{C}}$. In three dimensions this corresponds to considering $W$ of the form \eqref{potdef}, with $\Psi$ close to $1$. This additional assumption was instrumental to the proof we provided, where we resorted to the use of the Implicit Function Theorem, but we \textit{conjectured} in \cite{MMRSV-Pert2} that the result was likely to hold true without it.

This was indeed demonstrated, in the \textit{two-dimensional} case, in the recent breakthrough paper \cite{CS2d} by Carrillo and Shu. 
The approach followed in \cite{CS2d} is based on an ingenious way of rewriting the energy in terms of a one-dimensional profile, and then deduce the minimality of ellipses from the minimality of the semi-circle law for the one-dimensional logarithmic energy proved in \cite{Wigner}. 

Another question of interest is what causes a loss of dimensionality for minimisers. The result in \cite{CS2d} and our paper show that, in two and three dimensions, a necessary condition for this to happen is the degeneracy of the Fourier transform. However there is currently no criterion indicating which shape will be preferred by the minimiser in the degenerate case. Indeed, there are explicit examples of anisotropic energies with degenerate Fourier transform for which the minimiser has a full-dimensional support (see \cite{Giorgio} in the two-dimensional case and \cite{CMMRSV2} in the three-dimensional case). An instance of loss of dimensionality in three dimensions is shown in Example~\ref{ex:loss-dim}.

\subsection{Idea of the proof of the main result} We sketch here the idea of the proof of the main result, Theorem \ref{Mainthm}. The existence of  a compactly supported minimiser follows by standard methods. Moreover, the sign condition on the Fourier transform of $W$ implies that the energy is strictly convex, and hence there is a unique minimiser of $I$. We can then focus on the characterisation of such minimiser, which is equivalent to finding a solution of the Euler-Lagrange conditions, due to the strict convexity of the energy. 

 Motivated by the perturbation result \cite{MMRSV-Pert} and by \cite{CS3d}, we look for a candidate ellipsoid $E\subset \R^3$ such that its normalised characteristic function $\chi:=\chi_E/|E|$ satisfies the Euler-Lagrange conditions
\begin{align}\label{EL1-intro}
\left(W\ast \chi \right) (x)+ \frac{1}{2}|x|^2&= C \quad \text{for }  x \in E,\\
\left(W\ast \chi  \right) (x)+ \frac{1}{2}|x|^2&\ge C \quad \text{for }x \in \R^3 \setminus E. \label{EL2-intro}
\end{align}
A crucial step in our approach is to resort to a Fourier representation of the potential $W\ast \chi$, appearing at the left-hand side of \eqref{EL1-intro}--\eqref{EL2-intro} (see Section~\ref{sub:F}). One of the advantages of this representation is that it shows directly that $W\ast \chi$ is quadratic on $E$, which is a necessary condition for \eqref{EL1-intro} to hold. Moreover, in Fourier terms it is immediate to see that if an ellipsoid satisfies the stationarity condition \eqref{EL1-intro}, then it satisfies also \eqref{EL2-intro}. Hence it remains to find a stationary ellipsoid. 

In Fourier terms, the stationarity condition \eqref{EL1-intro} can be equivalently expressed as the system
\begin{equation}\label{EL1-Fourier-intro-d}
\frac{3}{4\sqrt{2\pi}}
\int_{\mathbb{S}^2}\frac{\omega_i\omega_j\widehat{W}(\omega)}{(M\omega\cdot\omega)^{3/2}}\,d\mathcal{H}^2(\omega)=\delta_{ij},
\quad \text{ for } i,j=1,2,3,
\end{equation}
where $M$ is a positive-definite symmetric matrix which incorporates the information about the semi-axes of the ellipsoid and its orientation. A key observation is that a solution 
to the system \eqref{EL1-Fourier-intro-d} is a critical point of the auxiliary scalar function $f$ which is 
defined in \eqref{def:function-f} for positive symmetric matrices $M$. To find a solution of \eqref{EL1-Fourier-intro-d} we then study the auxiliary function $f$ and show that, when $\widehat W>0$ on $\mathbb{S}^2$, it admits a minimiser $M$. 

When instead   $\widehat W\geq0$, we repeat the procedure above for a perturbed potential with strictly positive Fourier transform, and by letting the perturbation parameter to zero, we investigate the possible stationary shapes in the limit. Energy considerations allow us to exclude that the limiting measure concentrates on points or on segments, hence only the ellipsoid and semi-ellipsoid law measures are attainable. We then show that the semi-ellipsoid law can be attained, by providing an explicit example.

\subsection{Plan of the paper} The paper is organised as follows. In Section~\ref{sect:prelim} we collect some preliminary results that will be used in the paper. In particular, in Section~\ref{sub:F} we compute the Fourier representation of $W\ast \chi$, which is a crucial step of our approach. Section~\ref{sec:main-res} is the heart of the paper and contains the proof of the main result, Theorem \ref{Mainthm}.

\section{Preliminaries}\label{sect:prelim}
In this section we collect some definitions and preliminary results that will be needed in the paper. We also establish some notation.

\subsection{Spherical harmonics and Fourier transforms}
On $L^2(\mathbb{S}^{2})$ we consider the Fourier basis given by the spherical harmonics, which are obtained as the restriction to $\mathbb{S}^2$ of homogeneous harmonic polynomials of degree $k\geq 0$.
For $k\geq 0$, the dimension of the space of spherical harmonics of degree $k$ in $\R^3$ is $2k+1$.
In particular, any
$\varphi\in L^2(\mathbb{S}^{2})$ can be decomposed as
\begin{equation}\label{Fser}
\varphi=\sum_{k=0}^{\infty} a_k\Phi_k,
\end{equation}
where $\Phi_k$ is a spherical harmonic of degree $k\geq 0$ with
\begin{equation}\label{unitary-L2}
\|\Phi_k\|_{L^2(\mathbb{S}^{2})}=1
\end{equation}
and $\{a_k\}_{k\geq 0}\in \ell^2$. We note that $\|\varphi\|_{L^2(\mathbb{S}^{2})}=\|\{a_k\}_{k\geq 0} \|_{\ell^2}$.

We recall that for any homogeneous harmonic polynomial $\Phi_k$ of degree $k\geq 0$ we have the inequality
\begin{equation}\label{infty2spher}
\|\Phi_k\|_{L^{\infty}(\mathbb{S}^{2})}\leq \sqrt{\frac{2k+1}{4\pi}}\|\Phi_k\|_{L^2(\mathbb{S}^{2})},
\end{equation}
see \cite[Proposition~4.16]{Eft-Frye}.

We now recall the definition of Sobolev spaces on the sphere, see for instance \cite{Bar-et-al}. Let $\Delta_S$ denote the Laplace-Beltrami operator on $\mathbb{S}^{2}$.
For $s>0$, we denote by $H^s(\mathbb{S}^{2})$ the subspace of all $\varphi\in L^2(\mathbb{S}^{2})$ such that
$(-\Delta_S)^{s/2}\varphi\in L^2(\mathbb{S}^{2})$.
We identify $H^0(\mathbb{S}^{2})$ with $L^2(\mathbb{S}^{2})$.
On $H^s(\mathbb{S}^{2})$, $s>0$, 
we consider the norm
$$\|\varphi\|_{H^s(\mathbb{S}^{2})}^2=\|\varphi\|_{L^2(\mathbb{S}^{2})}^2+\|(-\Delta_S)^{s/2}\varphi\|_{L^2(\mathbb{S}^{2})}^2.$$
For any $s\geq 0$, the space $H^s(\mathbb{S}^{2})$ can be equivalently defined as the subspace of all $\varphi\in L^2(\mathbb{S}^{2})$ such that
$$\big\{\big(1+\sqrt{k(k+1)}\big)^{s}a_k\big\}_{k\geq 0}\in \ell^2$$
(where $\{a_k\}_{k\geq 0}$ is as in \eqref{Fser}) with equivalent norm
$$
\|\varphi\|_{H^s(\mathbb{S}^{2})}=\big\|\big\{\big(1+\sqrt{k(k+1)}\big)^{s}a_k\big\}_{k\geq 0} \big\|_{\ell^2}.
$$

Moreover, we have the following embedding.

\begin{lemma}\label{immersion}
The space $H^s(\mathbb{S}^{2})$ is continuously embedded in $C^0(\mathbb{S}^{2})$ for any $s>1$.
\end{lemma}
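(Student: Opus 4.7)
The plan is to control $\|\varphi\|_{L^\infty(\mathbb{S}^2)}$ by $\|\varphi\|_{H^s(\mathbb{S}^2)}$ by combining the spherical harmonic decomposition of $\varphi$ with the pointwise estimate \eqref{infty2spher}, and then upgrade the resulting $L^\infty$ bound to continuity by observing that the partial sums of the expansion converge uniformly.

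More precisely, given $\varphi\in H^s(\mathbb{S}^2)$ with $s>1$, I will write $\varphi=\sum_{k\geq 0}H_k$ in $L^2(\mathbb{S}^2)$, where $H_k$ is the projection of $\varphi$ onto the space of spherical harmonics of degree $k$, so that $H_k$ is itself a homogeneous harmonic polynomial of degree $k$ restricted to the sphere. Grouping terms of equal degree in \eqref{Fser}, the equivalent characterisation of the $H^s$ norm gives
$$
\|\varphi\|_{H^s(\mathbb{S}^2)}^2=\sum_{k\geq 0}\bigl(1+\sqrt{k(k+1)}\bigr)^{2s}\|H_k\|_{L^2(\mathbb{S}^2)}^2.
$$
Applying \eqref{infty2spher} to each $H_k$, the triangle inequality and Cauchy--Schwarz in $\ell^2$ yield
$$
\|\varphi\|_{L^\infty(\mathbb{S}^2)}\leq\sum_{k\geq 0}\|H_k\|_{L^\infty(\mathbb{S}^2)}\leq\bigg(\sum_{k\geq 0}\frac{2k+1}{4\pi\bigl(1+\sqrt{k(k+1)}\bigr)^{2s}}\bigg)^{\!1/2}\|\varphi\|_{H^s(\mathbb{S}^2)}.
$$
Since $\sqrt{k(k+1)}\sim k$ as $k\to\infty$, the general term of the scalar series behaves like $k^{1-2s}$, so the series converges precisely when $s>1$, which is exactly the assumed range. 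This gives the continuous bound $\|\varphi\|_{L^\infty(\mathbb{S}^2)}\leq C_s\|\varphi\|_{H^s(\mathbb{S}^2)}$.

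To upgrade this $L^\infty$ bound to continuity, I apply the same Cauchy--Schwarz estimate to the tails $\sum_{k>N}H_k$: the tail of the weight series tends to $0$ as $N\to\infty$, so the partial sums $S_N=\sum_{k=0}^N H_k$ form a Cauchy sequence in $L^\infty(\mathbb{S}^2)$. Each $S_N$ is smooth, hence the uniform limit is continuous on $\mathbb{S}^2$; since it also coincides with $\varphi$ in $L^2(\mathbb{S}^2)$, it is the desired continuous representative of $\varphi$, and the estimate above is exactly the continuity of the embedding.

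The only point that requires a moment of care is that \eqref{infty2spher} must be applied to the \emph{entire} degree-$k$ projection $H_k$, not merely to a single orthonormal basis element of degree $k$. This is not really an obstacle, however, since the space of homogeneous harmonic polynomials of degree $k$ is a linear space and the inequality \eqref{infty2spher} is quoted for an arbitrary element of it; all the remaining work is the elementary convergence check for the weighted series.
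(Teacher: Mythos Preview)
Your proof is correct and follows essentially the same route as the paper's: both expand $\varphi$ in spherical harmonics grouped by degree, apply the pointwise bound \eqref{infty2spher} together with Cauchy--Schwarz to obtain the $L^\infty$ estimate with the same weight series $\sum_k (2k+1)\bigl(1+\sqrt{k(k+1)}\bigr)^{-2s}$, and then deduce continuity from uniform convergence of the partial sums. The only difference is that you spell out the tail argument for continuity explicitly, whereas the paper simply remarks that $\varphi$ is a uniform limit of continuous functions.
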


\begin{proof}
It is enough to prove that
\begin{equation}\label{emb_C}
\|\varphi\|_{L^{\infty}(\mathbb{S}^{2})}\leq C(s)\|\varphi\|_{H^s(\mathbb{S}^{2})}
\end{equation}
for every $\varphi\in H^s(\mathbb{S}^{2})$ and any $s>1$, where $C(s)>0$ is a constant depending on $s$ only. 
Indeed, if \eqref{emb_C} holds true, then it is easy to infer that $\varphi\in H^s(\mathbb{S}^{2})$ is continuous, since it is the uniform limit of continuous functions, and the proof is then complete.

By \eqref{Fser}, \eqref{unitary-L2}, and \eqref{infty2spher} we have that
\begin{eqnarray*}
\|\varphi\|_{L^{\infty}(\mathbb{S}^{2})} & \leq & \sum_{k=0}^{\infty}|a_k|\|\Phi_k\|_{L^{\infty}(\mathbb{S}^{2})}
\ \leq \
\frac{1}{\sqrt{4\pi}}\sum_{k=0}^{\infty}\sqrt{2k+1}|a_k| \\
& = &
\frac{1}{\sqrt{4\pi}}\sum_{k=0}^{\infty}\frac{\sqrt{2k+1}}{\big(1+\sqrt{k(k+1)}\big)^{s}}\big(1+\sqrt{k(k+1)}\big)^{s}|a_k|\\
& \leq & \frac{1}{\sqrt{4\pi}}\Bigg(\sum_{k=0}^{\infty}\frac{2k+1}{\big(1+\sqrt{k(k+1)}\big)^{2s}}\Bigg)^{1/2}
\|\varphi\|_{H^s(\mathbb{S}^{2})}.
\end{eqnarray*}
Since the series above is convergent for $s>1$, the proof is complete.
\end{proof}

The Fourier transform definition we adopt is 
$$
\widehat{f}(\xi) =\frac{1}{(2\pi)^{3/2}}\int_{\R^3} f(x) e^{-i \xi \cdot x}\, dx, \quad \xi \in \R^3,
$$
for functions $f$ in the Schwartz space $\mathcal{S}$.
Correspondingly, the inverse Fourier transform is the following
$$
f(x)=\frac{1}{(2\pi)^{3/2}} \int_{\R^3} \widehat{f}(\xi) e^{i \xi \cdot x}\, d\xi, \quad x \in \R^3.
$$

Finally, we recall the expression of the Fourier transform for homogeneous harmonic polynomials. If $\Phi_k$ is a homogeneous harmonic polynomial of degree $k\geq 1$ and $0< s < 3$, then
\begin{equation*}
\frac{\Phi_k(x)}{|x|^{3-s+k}}  \xrightarrow{\rm{Fourier}} (-i)^{k} 2^{s-3/2} \frac{\Gamma(\frac{k+s}{2})}{\Gamma(\frac{k+3-s}{2})}\,\frac{\Phi_k(\xi)}{|\xi|^{k+s}},
\end{equation*}
see \cite[Chapter~III, Theorem~5]{S} where a slightly different definition of the Fourier transform is adopted.
In particular, for $s=2$, we have
\begin{equation}\label{foupolk2}
\frac{\Phi_k(x)}{|x|^{1+k}}  \xrightarrow{\rm{Fourier}} (-i)^{k} \sqrt{2} \frac{\Gamma(\frac{k}{2}+1)}{\Gamma(\frac{k+1}{2})}\,\frac{\Phi_k(\xi)}{|\xi|^{k+2}}
=:b_k \,\frac{\Phi_k(\xi)}{|\xi|^{k+2}}.
\end{equation}
The formula \eqref{foupolk2} holds true also for $k=0$, and can be proved by a classical argument based on the fundamental solution of the Laplacian operator.


\subsection{The Fourier transform of $W$.}
Let $W$ be as in \eqref{potdef} with $\Psi$ even and in $H^s(\mathbb{S}^2)$ for $s>\frac32$. 
Since $\Psi$ is continuous, the kernel $W$ is locally $L^1$ and bounded at infinity, therefore it is a tempered distribution. We now write its Fourier transform. 

To this aim, we first write $W$ in terms of spherical harmonics, using \eqref{potdef} and the decomposition \eqref{Fser} for $\Psi$. 
Since $\Psi$ is even, the decomposition \eqref{Fser} for $\Psi$ reduces to
\begin{equation}\label{phieven}
\Psi=\sum_{k=0}^{\infty} a_{2k}\Phi_{2k}.
\end{equation}
Hence, for $x\neq 0$, we have 
$$
W(x) = \sum_{k=0}^{\infty} a_{2k}\frac{\Phi_{2k}(x)}{|x|^{1+2k}}.
$$
By \eqref{foupolk2} we have 
\begin{equation}\label{foupolk3}
\frac{\Phi_{2k}(x)}{|x|^{1+2k}}  \xrightarrow{\rm{Fourier}} (-1)^{k}\sqrt{2} \frac{\Gamma(k+1)}{\Gamma(k+\frac{1}{2})}\,\frac{\Phi_{2k}(\xi)}{|\xi|^{2k+2}}
= b_{2k}\frac{\Phi_{2k}(\xi)}{|\xi|^{2k+2}}.
\end{equation}
Note that, since $$\Gamma\left(k+\tfrac12\right)=\frac{\sqrt{\pi}(2k)!}{4^k k!}\quad\text{and}\quad \Gamma(k+1)=k!,$$
we may rewrite
\begin{equation}\label{bcoeff}
b_{2k}=\sqrt{\frac2{\pi}}(-1)^k4^k\frac{k!k!}{(2k)!}.
\end{equation}
Moreover,
by Stirling's formula,
$|b_{2k}|\sim  \sqrt{2k}$, hence, for an absolute constant $C$,
$$
|b_{2k}| \leq C(1+\sqrt{2k(2k+1)})^{1/2},\quad k\geq 0.
$$
Since $\Psi\in H^{s}(\mathbb{S}^{2})$ for $s>\frac32$, we deduce that
\begin{equation}\label{hatW}
\widehat{W}(\xi)=\frac1{|\xi|^2}\widehat{\Psi}(\xi/|\xi|)
\end{equation}
where, with a little abuse of notation,
\begin{equation}\label{phievenbis}
\widehat{\Psi}:=\sum_{k=0}^{\infty} a_{2k}b_{2k}\Phi_{2k}\in H^{s-\frac12}(\mathbb{S}^{2}).
\end{equation}
Note that since $s-\frac12>1$, the function $\widehat\Psi$ is continuous by Lemma~\ref{immersion}.

\subsection{Ellipses and ellipsoids} For any $a=(a_1,a_2,a_3)\in \R^3$, $D(a)=\mathrm{diag}(a_1,a_2,a_3)$ stands for the diagonal matrix such that $D_{ii}=a_i$ for $i=1,2,3$. Given $a=(a_1,a_2,a_3)\in \R^3$ with $a_i>0$ for $i=1,2,3$, we let 
\begin{equation}\label{el}
E_0(a):= \left\{x=(x_1,x_2,x_3)\in \R^3:\ \frac{x_1^2}{a_1^2}+\frac{x_2^2}{a_2^2}+\frac{x_3^2}{a_3^2} \le 1\right\}
\end{equation}
denote the compact set enclosed by the ellipsoid with semi-axes of length $a_1$, $a_2$, and $a_3$ on the coordinate axes. Note that 
\begin{equation*}
E_0(a) = D(a)\overline{B},
\end{equation*}
where $B$ denotes the unit ball $B_1(0)\subset\R^3$.
A general ellipsoid $E\subset \R^3$ centred at the origin can be then obtained by rotating $E_0(a)$ in \eqref{el} with respect to the coordinate axes, namely as
\begin{equation}\label{ellrot}
E= R E_0(a)=R D(a) \overline{B},
\end{equation}
for some rotation $R\in SO(3)$.

Similarly, for given constants $a_1>0$ and $a_2>0$, we let
\begin{equation}\label{ellipse}
\tilde{E}_0(a_1,a_2):=\left\{(x_1,x_2)\in \R^2:\ \frac{x_1^2}{a_1^2}+\frac{x_2^2}{a_2^2}  \leq 1\right\}\times \{0\}
\end{equation}
denote 
the compact set enclosed by the ellipse with semi-axes of length $a_1$ and $a_2$ on the coordinate axes of the $x_1x_2$-plane. 
A general two-dimensional ellipse $\tilde{E}\subset \R^3$ centred at the origin can be then obtained by rotating $\tilde{E}_0(a_1,a_2)$ in \eqref{ellipse} with respect to the coordinate axes, namely as
\begin{equation}\label{ellipserot}
\tilde{E}= R \tilde{E}_0(a_1,a_2),
\end{equation}
for some rotation $R\in SO(3)$.

\subsection{The Fourier transform of the characteristic function of an ellipsoid}
The Fourier transform of the characteristic function of $B$ in $\R^3$ is given by
$$\widehat{\chi_{B}}(\xi)=\frac{J_{3/2}(|\xi|)}{|\xi|^{3/2}},$$
where $J_{\alpha}$ denotes the Bessel function of the first kind of order $\alpha$.
We recall that
$$J_{3/2}(r)=\sqrt{\frac{2}{\pi r}}\Big(\frac{\sin r}r-\cos r\Big),\quad r>0,$$
hence
\begin{equation}\label{chiB-J}
\widehat{\chi_{B}}(\xi)=
\sqrt{\frac{2}{\pi }}\Big(\frac{\sin |\xi|}{|\xi|^3}-\frac{\cos |\xi|}{|\xi|^2}\Big).
\end{equation}
Moreover, we have the following asymptotic behaviours:
$$\frac{J_{3/2}(r)}{r^{3/2}}\sim \frac1{2^{3/2}\Gamma(5/2)}=
\frac13\sqrt{\frac{2}{\pi}},
\qquad \text{as }r\to 0^+,$$
and
$$\frac{J_{3/2}(r)}{r^{3/2}}\sim -\sqrt{\frac{2}{\pi }}\frac1{r^{2}}\cos r,\qquad \text{as }r\to +\infty,$$
see for instance \cite[Section~5.16]{Lebedev}.

Let $E$ be an ellipsoid of the form \eqref{ellrot}. If we set $\chi:=\chi_E/|E|$, then it is immediate to see that
\begin{equation}\label{chiE-chiB}
\widehat\chi(\xi)=\frac1{|B|}\widehat{\chi_{B}}(D(a)R^T\xi).
\end{equation}

\subsection{The Fourier representation of $W\ast \chi$}\label{sub:F}
In this section, we assume that $\Psi$ is even and in $H^s(\mathbb{S}^2)$, for $s>\frac32$. We note that here we {\em do not require} any sign condition on $\Psi$.

Let $E$ be an ellipsoid of the form \eqref{ellrot}, and let $\chi:=\chi_E/|E|$. We express $W\ast \chi$ in Fourier terms.
By \cite[Chapter~VI.3, Theorem~6]{Yosida} and by using \eqref{chiB-J} and \eqref{chiE-chiB}, we have
\begin{eqnarray*}
\widehat{W\ast\chi}(\xi) & = & (2\pi)^{3/2}\widehat{W}(\xi) \widehat{\chi}(\xi)
\\
& = & \frac{4\pi}{|B|}
\Bigg(\frac{\sin (|D(a)R^T\xi|)}{|D(a)R^T\xi|^3}-\frac{\cos( |D(a)R^T\xi|)}{|D(a)R^T\xi|^2}\Bigg)\frac1{|\xi|^2}\widehat{\Psi}(\xi/|\xi|)\in L^1(\mathbb{R}^3),
\end{eqnarray*}
with $\Psi$ as in \eqref{phieven} and $\widehat{\Psi}$ as in \eqref{phievenbis}.
Hence, the inversion formula holds, that is,
$$
(W\ast\chi)(x)=\int_{\R^3}\widehat W(\xi)\widehat\chi(\xi)e^{ix\cdot\xi}\, d\xi=\int_{\R^3}\widehat W(\xi)\widehat\chi(\xi)\cos(x\cdot\xi)\, d\xi
$$
for every $x\in\R^3$. Writing this integral in spherical coordinates, we obtain
\begin{align*}
&(W\ast\chi)(x) \\
& =\frac{1}{|B|}\sqrt{\frac2{\pi}}
\int_{\mathbb{S}^2}\int_0^{\infty}\Big(\frac{\sin (r|D(a)R^T\omega|)}{r^3|D(a)R^T\omega|^3}-\frac{\cos( r|D(a)R^T\omega|)}{r^2|D(a)R^T\omega|^2}\Big)\widehat{\Psi}(\omega)\cos(x\cdot r\omega)\,dr\,d\mathcal{H}^2(\omega).
\end{align*}
Setting
\begin{equation}\label{alpha-rho}
\rho:=r |D(a)R^T\omega|\quad\text{and}\quad \alpha(x,\omega):=\frac{x\cdot \omega}{|D(a)R^T\omega|},
\end{equation} 
we have
$$
(W\ast\chi)(x)=\frac{1}{|B|}\sqrt{\frac2{\pi}}
\int_{\mathbb{S}^2}\Bigg(\int_0^{\infty}\Big(\frac{\sin \rho}{\rho^3}-\frac{\cos\rho}{\rho^2}\Big)\cos( \alpha(x,\omega)\rho)\,d\rho\Bigg)\frac{\widehat{\Psi}(\omega)}{{|D(a)R^T\omega|}}\,d\mathcal{H}^2(\omega).
$$
By several integration by parts we obtain that for any $\alpha\in\R$
$$\int_0^{\infty}\Big(\frac{\sin \rho}{\rho^3}-\frac{\cos\rho}{\rho^2}\Big)\cos(\alpha\rho )\,d\rho=\frac12(1-\alpha^2)\int_0^{\infty}\frac{\sin \rho\cos(\alpha\rho)}{\rho} \,d\rho.
$$
Computing this improper integral is a simple exercise. Alternatively, one can appeal to formula 1.6(1) on page 18 of \cite{Bateman} and obtain 
$$\int_0^{\infty}\Big(\frac{\sin \rho}{\rho^3}-\frac{\cos\rho}{\rho^2}\Big)\cos(\alpha\rho )\,d\rho=
\frac{\pi}4(1-\alpha^2)\chi_{(-1,1)}(\alpha).
$$
Finally, we have that
\begin{equation}\label{pot}
(W\ast\chi)(x) = 
\frac{1}{|B|}\sqrt{\frac{\pi}8}
\int_{\mathbb{S}^2}(1-\alpha^2(x,\omega))\chi_{(-1,1)}(\alpha(x,\omega))\frac{\widehat{\Psi}(\omega)}{{|D(a)R^T\omega|}}\,d\mathcal{H}^2(\omega).
\end{equation}

The above formula shows that $W\ast\chi$ is a quadratic polynomial in $E$, since 
for $x$ in the interior of $E$ one has that
$|\alpha(x,\omega)|<1$ for any $\omega\in\mathbb{S}^2$. This can be seen by rewriting 
$$\alpha(x,\omega)=(D(a)^{-1}R^Tx)\cdot (D(a)R^T\omega/|D(a)R^T\omega|)=:y\cdot \tilde\omega,$$
where $\tilde\omega\in \mathbb{S}^2$. It is then immediate to see that $x\in E$ if and only if $y\in \overline{B}$. 
Hence, for $x$ in the interior of $E$ we have that
$|\alpha(x,\omega)|<1$ for any $\omega\in\mathbb{S}^2$. We also note that for $x\in\partial E$ we have that $|\alpha(x,\omega)|<1$ except on a set of $\omega\in\mathbb{S}^2$ of $\mathcal{H}^{2}$-measure zero, whereas for $x\notin E$ we have that $|\alpha(x,\omega)|\neq 1$ except on a set of $\mathcal{H}^{2}$-measure zero.

We conclude this section by computing the gradient and the Hessian of $W\ast\chi$, which will be used in the proof of Theorem~\ref{Mainthm}.
For the gradient, since the function $t\in\R\mapsto (1-t^2)\chi_{(-1,1)}(t)$ is a Lipschitz function, we can differentiate \eqref{pot}
with respect to $x$. This provides us with the following formula: for any $j=1,2,3$ and any $x\in \R^3$
\begin{equation}\label{potgradbis-0}
\frac{\partial(W\ast\chi)}{\partial x_j}(x)
=-\frac{1}{|B|}\sqrt{\frac{\pi}2}
\int_{\mathbb{S}^2}\frac{\alpha(x,\omega)\,\omega_j}{|D(a)R^T\omega|}\chi_{(-1,1)}(\alpha(x,\omega))\frac{\widehat{\Psi}(\omega)}{{|D(a)R^T\omega|}}\,d\mathcal{H}^2(\omega).
\end{equation}
From this formula we deduce that $W\ast\chi\in C^1(\mathbb{R}^3)$ and
that for any $x\in \R^3$
\begin{equation}\label{potgradbis}
\nabla(W\ast\chi)(x)\cdot x
=-\frac{1}{|B|}\sqrt{\frac{\pi}2}
\int_{\mathbb{S}^2}\alpha^2(x,\omega)\chi_{(-1,1)}(\alpha(x,\omega))\frac{\widehat{\Psi}(\omega)}{{|D(a)R^T\omega|}}\,d\mathcal{H}^2(\omega).
\end{equation}

Finally, we compute the Hessian of $W\ast\chi$ in the interior of $E$. 
From \eqref{potgradbis-0}, for any $j,k=1,2,3$ and any $x$ in the interior of $E$, we have  that 
\begin{equation}\label{pothes}
\frac{\partial^2(W\ast\chi)}{\partial x_j\partial x_k}(x)
=-\frac{1}{|B|}\sqrt{\frac{\pi}2}
\int_{\mathbb{S}^2}\frac{\omega_j\omega_k\widehat{\Psi}(\omega)}{|D(a)R^T\omega|^3}\,d\mathcal{H}^2(\omega),
\end{equation}
where we used that $\chi_{(-1,1)}(\alpha(x,\omega))=1$ for every $\omega\in\mathbb{S}^2$.

\section{Main results}\label{sec:main-res}
In this section we state and prove the main result of the paper. We start by introducing some notation. 

For any ellipsoid $E$ as in \eqref{ellrot}, we define the corresponding \emph{ellipsoid law} as the measure
\begin{equation}\label{ellipsoidlaw}
\mu_E(x):=\frac{\chi_E(x)}{|E|}=\frac{\chi_{E_0(a)}(R^Tx)}{|E_0(a)|}=\mu_{E_0(a)}(R^Tx),
\end{equation}
namely, as the normalised characteristic function of the ellipsoid.

For an ellipse $\tilde{E}$ as in \eqref{ellipserot}, we define the corresponding \emph{semi-ellipsoid law} as the measure
\begin{equation}\label{ellipselaw}
\mu_{\tilde{E}}(x):=\mu_{\tilde{E}_0(a_1,a_2)}(R^Tx),
\end{equation}
where we set
$$
\mu_{\tilde{E}_0(a_1,a_2)}(x):=\frac{2}{|B|a_1a_2}\sqrt{1-\frac{x_1^2}{a_1^2}-\frac{x_2^2}{a_2^2}}\,  \mathcal{H}^2\mres \tilde{E}_0(a_1,a_2)\otimes \delta_0(x_3).
$$

Our main result is the following theorem.

\begin{theorem}\label{Mainthm}
Let $W$ be as in \eqref{potdef} with $\Psi$ an even and strictly positive function in $H^s(\mathbb{S}^2)$, for $s>\frac32$. Assume that $\widehat{W}\geq 0$ on $\mathbb{S}^2$.
Then there exists a unique minimiser $\mu_0\in \mathcal{P}(\R^3)$ of $I$, which can be characterised as follows.

\begin{enumerate}[\textnormal{(}a\textnormal{)}]
\item\label{nondegenerate} If $\widehat{W}> 0$ on $\mathbb{S}^2$,
then $\mu_0$ is an ellipsoid law of the form \eqref{ellipsoidlaw}.

\item\label{degenerate} If $\widehat{W}\geq 0$ on $\mathbb{S}^2$, then
either $\mu_0$ is an ellipsoid law of the form \eqref{ellipsoidlaw} or $\mu_0$ is a
semi-ellipsoid law of the form \eqref{ellipselaw}.
\end{enumerate}
\end{theorem}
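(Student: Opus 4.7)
The plan is to combine the Fourier representation from Section~\ref{sub:F} with a variational argument for an auxiliary function on positive-definite symmetric matrices. Existence of a compactly supported minimiser of $I$ is standard: the confinement $\int|x|^2\,d\mu$ gives tightness of any minimising sequence, while $\widehat W\geq 0$ together with Plancherel gives weak$^*$ lower semicontinuity of $\mu\mapsto\int W\ast\mu\,d\mu$. Writing
$$I(\mu)=\int_{\R^3}\widehat W(\xi)|\widehat\mu(\xi)|^2\,d\xi+\int_{\R^3}|x|^2\,d\mu(x),$$
the sign condition $\widehat W\geq 0$ together with the strictly convex confinement makes $I$ strictly convex on $\mathcal P(\R^3)$, so there is at most one minimiser $\mu_0$. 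Consequently $\mu_0$ is completely characterised by the Euler--Lagrange conditions \eqref{EL1-intro}--\eqref{EL2-intro}, and it suffices to exhibit a candidate satisfying them.

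For part (a), I will look for $\mu_0=\chi_E/|E|$ with $E=RD(a)\overline{B}$. The Fourier representation \eqref{pot} shows that $W\ast\chi$ is already a quadratic polynomial on $E$, since $|\alpha(x,\omega)|<1$ in the interior; so matching quadratic coefficients reduces \eqref{EL1-intro} to matching Hessians. By \eqref{pothes} and \eqref{hatW}, after setting $M:=RD(a)^2R^T$, this becomes the system \eqref{EL1-Fourier-intro-d}. To solve it, I introduce the auxiliary function
$$f(M):=c_1\int_{\mathbb{S}^2}\frac{\widehat W(\omega)}{(M\omega\cdot\omega)^{1/2}}\,d\mathcal{H}^2(\omega)+c_2\,\mathrm{tr}(M)$$
on the open cone $\mathrm{Sym}^{+}_{3}$ of positive-definite symmetric matrices, with constants $c_1,c_2>0$ calibrated so that $\nabla f(M)=0$ is exactly \eqref{EL1-Fourier-intro-d}. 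Under the strict positivity $\widehat W>0$ on $\mathbb{S}^2$, the first term blows up when an eigenvalue of $M$ approaches zero (since $M\omega\cdot\omega\to 0$ along a direction), while $c_2\,\mathrm{tr}(M)$ dominates as $\|M\|\to\infty$; hence $f$ is coercive on $\mathrm{Sym}^{+}_{3}$ and attains its minimum at some $M^{\ast}\in\mathrm{Sym}^{+}_{3}$, which provides the desired ellipsoid. The outer inequality \eqref{EL2-intro} then follows by comparing \eqref{pot} with its quadratic extension: for $x\notin E$ the indicator $\chi_{(-1,1)}(\alpha(x,\omega))$ cuts the region where $1-\alpha^2<0$, so since $\widehat W\geq 0$ the truncated integral is at least the untruncated quadratic polynomial $Q(x)$; combined with $Q(x)+\tfrac12|x|^2=C$ on $E$, this gives \eqref{EL2-intro}.

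For part (b), with $\widehat W$ possibly vanishing on $\mathbb{S}^2$, I approximate the potential by $W_\varepsilon:=W+\varepsilon W_{\mathrm{C}}$, where $W_{\mathrm{C}}(x)=1/|x|$ is the Coulomb potential, so that $\widehat{W_\varepsilon}>0$ on $\mathbb{S}^2$. By part (a), the perturbed energy $I_\varepsilon$ has a unique ellipsoid-law minimiser $\mu_\varepsilon=\chi_{E_\varepsilon}/|E_\varepsilon|$ with $E_\varepsilon=R_\varepsilon D(a^\varepsilon)\overline{B}$. Uniform second-moment bounds give a weak$^*$ limit $\mu_\varepsilon\rightharpoonup^{*}\mu_0$, which is the minimiser of $I$. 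The limiting shape depends only on whether some $a^\varepsilon_i$ vanish. Energy comparison with an explicit two-dimensional ellipse competitor rules out concentration on sets of Hausdorff dimension below two, since the anisotropic $1/|x|$-interaction of any probability measure supported on a segment or a point is infinite, contradicting the uniform energy upper bound. Hence at most one semi-axis can collapse: if all three stay positive, $\mu_0$ is an ellipsoid law; if, say, $a^\varepsilon_3\to 0$ while $a^\varepsilon_1,a^\varepsilon_2$ stay positive, the slab formula
$$\int\chi_{E_\varepsilon}(x_1,x_2,x_3)\,dx_3=2a^\varepsilon_3\sqrt{1-\tfrac{x_1^2}{(a^\varepsilon_1)^2}-\tfrac{x_2^2}{(a^\varepsilon_2)^2}}$$
identifies the weak$^{*}$ limit as precisely the semi-ellipsoid density in \eqref{s-ellipsoidlaw-intro}, up to rotation. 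I expect the main technical obstacles to be twofold: proving existence of a critical point of $f$ under the sharp hypothesis $\widehat W>0$, rather than via the Implicit Function Theorem perturbative argument of \cite{MMRSV-Pert}; and, in the degenerate case, establishing the sharp lower bounds on the interaction energy on one-dimensional sets needed to exclude loss of dimensionality beyond the planar ellipse case.
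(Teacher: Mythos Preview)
Your overall architecture matches the paper's closely: Fourier representation of $W\ast\chi$, reduction of \eqref{EL1} to a system on $\mathrm{Sym}^+_3$, auxiliary function $f$ whose critical points solve that system, perturbation $W_\varepsilon=W+\varepsilon W_{\mathrm C}$ for the degenerate case, and elimination of lower-dimensional limits by infinite Coulomb self-energy. Your argument for \eqref{EL2} via comparison of the truncated and untruncated integrals in \eqref{pot} is correct and in fact slightly cleaner than the paper's gradient computation.

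There is, however, a genuine gap in your key step: the coercivity claim for $f$ is false as stated. You assert that the integral term in $f$ blows up when an eigenvalue of $M$ approaches zero, but this is only true when \emph{two} eigenvalues vanish simultaneously. If, say, $M=\mathrm{diag}(A_1,A_2,\lambda)$ with $A_1,A_2>0$ fixed and $\lambda\to 0^+$, then
\[
\int_{\mathbb{S}^2}\frac{\widehat W(\omega)}{(M\omega\cdot\omega)^{1/2}}\,d\mathcal{H}^2(\omega)\ \longrightarrow\ \int_{\mathbb{S}^2}\frac{\widehat W(\omega)}{(A_1\omega_1^2+A_2\omega_2^2)^{1/2}}\,d\mathcal{H}^2(\omega)<\infty,
\]
since near the poles $\pm e_3$ the singularity $1/\sqrt{\omega_1^2+\omega_2^2}$ is integrable against the surface measure. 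Hence $f$ extends continuously to the part of $\partial\,\mathrm{Sym}^+_3$ where exactly one eigenvalue vanishes, and nothing prevents the infimum from being attained there. The paper handles this by first reducing (via the exact scaling $f(tM)=t^{-1/2}g(M)+t\,h(M)$) to minimising $g$ on the compact slice $\{\mathrm{tr}(M)=1\}$, extending continuously to its closure, and then ruling out boundary minimisers by a direct derivative computation: moving mass from the two positive eigenvalues into the vanishing one strictly decreases $g$ near the boundary. This interior-direction argument (Step~2.2 in the paper) is the missing idea in your outline and is precisely where the strict positivity $\widehat W>0$ is used.

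A smaller correction: your justification of strict convexity is not right. The confinement $\mu\mapsto\int|x|^2\,d\mu$ is \emph{linear} in $\mu$, so it contributes no strict convexity; and $\widehat W\geq 0$ alone gives only convexity of the interaction. Strictness requires the Paley--Wiener argument: if $\int\widehat W|\widehat\nu|^2=0$ for $\nu=\mu_1-\mu_2$ compactly supported, then $\widehat\nu$ vanishes on an open set (since $\widehat W>0$ somewhere), hence everywhere by analyticity.
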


\begin{remark}\label{shrinkrem} Note that both cases in Theorem~\ref{Mainthm}($\ref{degenerate}$) can indeed occur. For instance, for the potential 
$$W(x)=\frac{1}{|x|}+\frac{x_1^2}{|x|^3},\quad x\in \R^3,\ x\neq 0,$$
whose Fourier transform is 
$$\widehat{W}(\xi)=\sqrt{\frac{8}{\pi}}\frac{(1-(\xi_1/|\xi|)^2)}{|\xi|^2},$$ 
it is shown in  \cite{CMMRSV2} that the minimiser of $I$ is the normalised characteristic function of a non-degenerate ellipsoid, hence an ellipsoid law as in  \eqref{ellipsoidlaw}. Instead, in Example ~\ref{ex:loss-dim} below, we construct a profile for which loss of dimension occurs.
\end{remark}

We proceed as follows. In Section~\ref{sect:first} we prove existence and uniqueness of the minimiser of the energy $I$, and show that it is characterised by the Euler-Lagrange conditions for $I$. In Section~\ref{strict:FT} we prove that there exists a minimising ellipsoid under the additional assumption that $\widehat{W}$ is strictly positive on $\mathbb{S}^2$, that is, we prove Theorem~\ref{Mainthm}($\ref{nondegenerate}$).
In Section~\ref{lossdimsec}, we consider the degenerate case and prove Theorem~\ref{Mainthm}($\ref{degenerate}$). Moreover, in Example~\ref{ex:loss-dim} we exhibit a potential for which loss of dimension occurs.

\subsection{Existence, uniqueness and characterisation of the minimiser}\label{sect:first}

We have the following result.
\begin{prop}
Let $W$ be as in \eqref{potdef} with $\Psi$ even, strictly positive, and in $H^s(\mathbb{S}^2)$, for $s>\frac32$. Assume that $\widehat{W}\geq 0$ on $\mathbb{S}^2$.
Then there exists a unique minimiser $\mu_0$ of $I$, which is characterised by the following Euler-Lagrange conditions:
\begin{align}\label{EL1}
\left(W\ast \mu_0  \right) (x)+ \frac{1}{2}|x|^2&= C \quad \text{for }\mu_0\text{-a.e. } x \in \supp\mu_0,\\
\left(W\ast \mu_0  \right) (x)+ \frac{1}{2}|x|^2&\ge C \quad \text{for }x \in \R^3 \setminus N \text{ with } \operatorname{Cap}(N)=0, \label{EL2}
\end{align}
where $\supp\mu_0$ stands for the support of $\mu_0$, $C$ is a constant, and $\operatorname{Cap}$ is the capacity.
\end{prop}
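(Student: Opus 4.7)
My plan is to prove existence by the direct method, derive the Euler--Lagrange conditions by first-order variations, and obtain uniqueness by combining the Euler--Lagrange characterisation with the nonnegativity of $\widehat W$ and a Paley--Wiener analyticity argument. For existence, since $\Psi>0$ is continuous on $\mathbb{S}^2$ (by Lemma~\ref{immersion}), we have $W\geq (\min_{\mathbb{S}^2}\Psi)/|x|>0$ off the origin, and $I(\mu)\geq \int|x|^2\, d\mu\geq 0$, with $I$ finite on a smooth density, so $\inf I<\infty$. For a minimising sequence $\{\mu_n\}\subset \mathcal{P}(\R^3)$, the bound $\int|x|^2 d\mu_n\leq I(\mu_n)$ gives uniform tightness, and Prokhorov provides a weak limit $\mu_0\in \mathcal{P}(\R^3)$ along a subsequence. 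Lower semicontinuity of the interaction term under weak convergence follows by approximating $W$ from below by bounded continuous functions and invoking monotone convergence; that of the confinement term is standard. Hence $\mu_0$ is a minimiser.

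For the Euler--Lagrange conditions, given a minimiser $\mu_0$ and a compactly supported signed measure $\nu=\eta^+-\eta^-$ with $\eta^\pm\in \mathcal{P}(\R^3)$ and $\eta^-\leq \mu_0$, the perturbation $\mu_0+t\nu$ is an admissible probability measure for small $t>0$. Differentiating $t\mapsto I(\mu_0+t\nu)$ at $t=0^+$ yields
$$2\int_{\R^3}\big(W\ast\mu_0+\tfrac12|x|^2\big)\, d\nu\geq 0,$$
and a standard potential-theoretic argument (Frostman type, cf.\ Saff--Totik) then produces a constant $C$ for which \eqref{EL1}--\eqref{EL2} hold. Compactness of $\supp\mu_0$ follows at once, since \eqref{EL1} together with $(W\ast\mu_0)(x)\to 0$ as $|x|\to \infty$ forces $|x|^2/2$ to be bounded on the support.

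For uniqueness, suppose $\mu_0$ and $\mu_1$ are two minimisers. Integrating \eqref{EL1}--\eqref{EL2} for $\mu_0$ against $\mu_1$ and vice versa (valid since both measures have finite $W$-energy and hence assign zero mass to polar sets) and adding the resulting inequalities yields
$$\int_{\R^3} W\ast(\mu_0-\mu_1)\, d(\mu_0-\mu_1)\leq 0.$$
By Plancherel this equals a positive constant times $\int_{\R^3}\widehat W(\xi)|\widehat{\mu_0-\mu_1}(\xi)|^2\, d\xi\geq 0$, so both sides vanish and $\widehat{\mu_0-\mu_1}\equiv 0$ on $\{\widehat W>0\}$. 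By \eqref{hatW}--\eqref{phievenbis}, $\widehat W(\xi)=\widehat\Psi(\xi/|\xi|)/|\xi|^2$ with $\widehat\Psi$ continuous, and since $\Psi>0$ forces the coefficient $a_0b_0$ in \eqref{phievenbis} to be strictly positive, $\widehat\Psi$ has strictly positive mean on $\mathbb{S}^2$ and hence, by continuity, is positive on a nonempty open set; thus $\widehat W>0$ on a nonempty open cone of $\R^3$. Since $\mu_0-\mu_1$ has compact support, Paley--Wiener gives that $\widehat{\mu_0-\mu_1}$ extends to an entire function on $\C^3$, which must vanish identically once it vanishes on an open subset of $\R^3$. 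Therefore $\mu_0=\mu_1$. The main obstacle will be this uniqueness step: the interaction is merely convex (not strictly convex) when $\widehat W$ vanishes on a set of positive measure, so one must combine the a posteriori compact support of the minimisers with analyticity in Fourier space to close the argument.
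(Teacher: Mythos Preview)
Your proposal is correct and follows essentially the same route as the paper: existence by the direct method, and uniqueness via the Fourier representation of the interaction energy together with the Paley--Wiener theorem applied to the compactly supported difference $\mu_0-\mu_1$. The only difference is organisational: the paper first proves strict convexity on the class of compactly supported finite-energy measures (from which uniqueness and the Euler--Lagrange characterisation follow simultaneously), whereas you derive the Euler--Lagrange conditions first and then feed them into the same Plancherel/Paley--Wiener argument to obtain uniqueness.
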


\begin{proof} Arguing as in \cite[Proposition~2.1]{CMMRSV2}, one can prove that a minimiser exists and that any minimiser has compact support and finite energy. We also note that a bound on the compact support can be found depending only on the maximum of $\Psi$ on $\mathbb{S}^2$.
Moreover, the energy $I$ is strictly convex on the subset of $\mathcal{P}(\R^3)$ given by
$$
\widetilde{\mathcal{P}}:=\Big\{\nu\in \mathcal{P}(\R^3):\ \nu\text{ has compact support and }\int_{\R^3}(W\ast\nu)(x)\,d\nu(x)<+\infty\Big\}.
$$
Strict convexity immediately implies uniqueness of the minimiser. The characterisation of the minimiser through the Euler-Lagrange conditions can then be deduced as in 
\cite[Theorem~3.1]{MRS}.

We briefly comment on the proof of strict convexity of $I$ on $\widetilde{\mathcal{P}}$. Arguing as in \cite[Proposition~2.1]{CMMRSV2}, strict convexity is equivalent to 
the following inequality: for any $\nu_1,\nu_2\in \widetilde{\mathcal{P}}$ 
\begin{equation}\label{F>0}
\int_{\R^3}\widehat{W}(\xi)|\widehat{(\nu_1-\nu_2)}(\xi)|^2\,d\xi\geq0
\end{equation}
and equality holds if and only if $\nu_1=\nu_2$. The inequality \eqref{F>0} follows immediately from \eqref{hatW} and the assumption $\widehat{W}\geq 0$ on $\mathbb{S}^2$. To show that equality holds in \eqref{F>0} if and only if $\nu:=\nu_1-\nu_2=0$, one can follow the proof of \cite[Theorem~2.6]{CS2d}. In fact, since by assumption $W>0$, its Fourier transform $\widehat{W}$ cannot be identically $0$. Hence, there exist $c_0>0$, $x^0\in \R^3$, and $r>0$, with $r<|x^0|$, such that
$\widehat{W}\geq c_0$ in $B_r(x^0)$. If
$$
\int_{\R^3}\widehat{W}(\xi)|\widehat{\nu}(\xi)|^2\,d\xi=0,
$$
then $\widehat{\nu}=0$ on $B_r(x^0)$. Since by the Paley-Wiener Theorem $\widehat{\nu}$ is the restriction
of an entire function to $\mathbb{R}^3$, $\widehat{\nu}$ must be identically zero. This concludes the proof.
\end{proof}

\subsection{The case of strictly positive Fourier transform}\label{strict:FT}
In this section we assume that $\widehat{W}$ is strictly positive on $\mathbb{S}^2$ and we prove Theorem~\ref{Mainthm}($\ref{nondegenerate}$).

\begin{proof}[Proof of Theorem~\textnormal{\ref{Mainthm}($\ref{nondegenerate}$)}]
Let $E$ be an ellipsoid of the form \eqref{ellrot} and let $\chi:=\chi_E/|E|$. For any $x\in \mathbb{R}^3$ we define the \emph{potential} \begin{equation}\label{potential_EL}
P(x):=(W\ast\chi)(x)+\frac{|x|^2}2. 
\end{equation}
We need to show that there exists an ellipsoid $E$
such that the corresponding function $P$ in \eqref{potential_EL} satisfies \eqref{EL1} and \eqref{EL2}.

As a preliminary remark, we note that a direct consequence of formula \eqref{pot}, together with the definition of $\alpha$ in \eqref{alpha-rho}, is that
\begin{equation*}
P(x)=P(0)+P_2(x)\quad\text{for any }x\in E,
\end{equation*}
where $P_2$ is a homogeneous second-order polynomial, whose expression can be computed in terms of $\widehat{\Psi}$ by \eqref{pothes}.

We divide the proof into three steps.\smallskip

\noindent
\textit{Step 1: The first Euler-Lagrange condition.} By the regularity and evenness of the potential $P$ in \eqref{potential_EL}, proving condition \eqref{EL1} is equivalent to showing that the Hessian of $P$ vanishes on $E$. By \eqref{pothes} this is equivalent to showing that for $i,j=1,2,3$
\begin{equation}\label{EL1-Fourier}
\frac{1}{|B|}\sqrt{\frac{\pi}2}
\int_{\mathbb{S}^2}\frac{\omega_i\omega_j\widehat{\Psi}(\omega)}{|D(a)R^T\omega|^3}\,d\mathcal{H}^2(\omega)=\delta_{ij},
\end{equation}
$\delta_{ij}$ being the Kronecker delta.

We need to show that
 there exist $a=(a_1,a_2,a_3)\in \R^3$, with $a_i>0$ for $i=1,2,3$, and $R\in SO(3)$ such that $E$ as in \eqref{ellrot} satisfies \eqref{EL1-Fourier}.
We begin by showing that \eqref{EL1-Fourier} can be written as a stationarity condition for an auxiliary scalar function defined on $\mathbb{M}_+$,
where
$\mathbb{M}_+$ is the space of positive definite symmetric $3\times 3$ matrices. 

Note that 
\begin{align*}
|D(a)R^T\omega|^3 &= (D(a)R^T\omega\cdot D(a)R^T\omega)^{3/2}=((D(a)R^T)^T D(a)R^T\omega\cdot \omega)^{3/2}\\
&=(R(D(a))^2R^T\omega\cdot \omega)^{3/2} = (R D(A)R^T\omega\cdot \omega)^{3/2},
\end{align*}
where $A=(A_1,A_2,A_3)$ is such that $A_i=a_i^2$ for $i=1,2,3$. By setting $M:= R D(A)R^T$, we observe that $M\in \mathbb{M}_+$. In terms of $M$, condition \eqref{EL1-Fourier} reads as
\begin{equation}\label{EL1-FourierM} 
\frac{1}{|B|}\sqrt{\frac{\pi}2}\int_{\S^2} \frac{\omega_i\omega_j\widehat \Psi(\omega)}{(M\omega\cdot \omega)^{3/2}} 
\, d\HH(\omega) = \delta_{ij},  \quad \text{for } i,j =1,2,3.
\end{equation}

We define the function $f: \mathbb{M}_+ \to \R$ as
\begin{equation}\label{def:function-f}
f(M):=g(M)+h(M)=\frac{\sqrt{2\pi}}{|B|}\int_{\mathbb{S}^2}\frac{\widehat{\Psi}(\omega)}{\sqrt{M\omega\cdot \omega}}d\HH(\omega)+\mathrm{tr}(M),
\end{equation}
where $M=[M_{ij}]_{i,j=1}^3$. Note that $f, g\geq 0$. 
We observe that the first Euler-Lagrange condition is satisfied if there exists $M_0\in\mathbb{M}_+$ such that $\nabla_M f(M_0)=0$, 
where we denoted, with some abuse of notation,
$$\nabla_M=\left(\frac{\partial}{\partial M_{11}},\frac{\partial}{\partial M_{22}},\frac{\partial}{\partial M_{33}},\frac{\partial}{\partial M_{12}},\frac{\partial}{\partial M_{13}},\frac{\partial}{\partial M_{23}}\right).$$
Therefore, the proof of \eqref{EL1-Fourier} is concluded if we show that there exists a stationary point for $f$. 
In the next step we actually show that $f$ has a global minimiser $M_0$ on $\mathbb{M}_+$. Since $\mathbb{M}_+$ is an open set in $\R^6$, this will imply that $M_0$ is a solution of the system \eqref{EL1-FourierM} and the proof is concluded.\smallskip

\noindent
\textit{Step 2: The function $f$ in \eqref{def:function-f} has a global minimiser on $\mathbb{M}_+$.}
We rewrite the minimisation problem $\min_{M\in \mathbb{M}_+}f(M)$ in a simpler form. 
We start by analysing the behaviour of $f$ along lines passing through the origin. For every fixed line we characterise the minimum point of $f$ on the line, and we then minimise over the lines. 

More precisely, let $M\in \mathbb{M}_+$ be fixed, and let $F: (0,+\infty)\to \R$ be the function defined as $F(t):=f(tM)$. Note that 
\begin{equation*}
F(t)=\frac{1}{\sqrt{t}}g(M)+th(M),
\end{equation*}
where we have used the scaling properties of the terms $g$ and $h$ in $f$. Hence the function $F$ has a unique minimiser at
$$t_{\text{min}}(M)=\left(\frac{g(M)}{2h(M)}\right)^{2/3}.$$
We now define the function $\tilde f: \mathbb{M}_+ \to \R$ as 
$$\tilde{f}(M):=f(t_{\text{min}}(M)M)=\frac{3}{2^{2/3}}g(M)^{2/3}h(M)^{1/3},$$ 
and we note that it is constant on rays emanating from the origin. Therefore, in minimising $\tilde{f}$ we can
introduce the constraint $\mathrm{tr}(M)=1$. Moreover, under this constraint, the problem 
$$\min_{M\in\mathbb{M}_+:\ \mathrm{tr}(M)=1}\tilde{f}(M)$$
is equivalent to
\begin{equation}\label{critmin1}
\min_{M\in\mathbb{M}_+:\ \mathrm{tr}(M)=1}g(M).
\end{equation}

We now consider the sets
$$T=\left\{A=(A_1,A_2,A_3)\in \R^3:\ A_i>0\text{ and }\sum_{i=1}^3A_i=1\right\}\quad\text{and}\quad\Omega=T\times SO(3),$$
endowed with the usual Euclidean distance.
For any $(A,Q)\in \Omega$, we call 
$M(A,Q)=Q^TD(A)Q$. This map is surjective onto the set of matrices $M\in \mathbb{M}_+$ with $\mathrm{tr}(M)=1$.
Thus \eqref{critmin1} is in turn equivalent to 
$$\min_{(A,Q)\in \Omega}\int_{\mathbb{S}^2}\frac{\widehat{\Psi}(\omega)}{\sqrt{ D(A)Q\omega\cdot Q\omega}}\,d\HH(\omega),$$
where 
we have neglected the positive constant factor in the definition of $g$.

By a change of variables, the minimisation problem we want to solve is  
$$\min_{(A,Q)\in \Omega}\gamma (A,Q),$$
where 
$$
\gamma(A,Q):=\int_{\mathbb{S}^2}\frac{\widehat{\Psi}(Q^T\omega)}{\sqrt{ D(A)\omega\cdot \omega}}\,d\HH(\omega).
$$
Clearly the function $\gamma$ is positive and continuous on $\Omega$. Moreover, since 
\begin{equation}\label{bound:What}
0<C_0\leq \widehat{\Psi}(\omega)\leq C_1 \quad \text{for all } \omega\in  \mathbb{S}^2,
\end{equation}
for some positive constants $C_0$ and $C_1$, the function $\gamma$ satisfies the bounds
\begin{equation}\label{bound:gamma}
C_0\int_{\mathbb{S}^2}\frac{1}{\sqrt{D(A)\omega\cdot \omega}}\,d\HH(\omega)  \leq \gamma(A,Q)\leq C_1 \int_{\mathbb{S}^2}\frac{1}{\sqrt{D(A)\omega\cdot \omega}}\,d\HH(\omega)
\end{equation}
for any $(A,Q)\in \Omega$.
Note that, since $SO(3)$ is compact, 
$$\overline{\Omega}=\overline{T}\times SO(3)$$ is a compact set,
where clearly
$$\overline{T}=\left\{A=(A_1,A_2,A_3)\in \R^3:\ A_i\geq 0\text{ and }\sum_{i=1}^3A_i=1\right\}.$$
Hence, if $\gamma$ could be extended by continuity to $\overline{\Omega}$, we would directly derive the existence of a global minimiser of $\gamma$ in $\overline \Omega$, and the remaining step to be proved would be that such a minimiser is, in fact, in $\Omega$. This is our strategy of proof.\smallskip

\noindent
\textit{Step 2.1: The function $\gamma$ has a global minimiser in $\overline{\Omega}$.} 
We observe that 
$$
\overline{T} \setminus T = \mathcal{V} \cup \mathcal{E},
$$
where $\mathcal{V}=\{e_1,e_2,e_3\}$ and 
$$\mathcal{E}=\left\{(\sigma,1-\sigma,0), (\sigma,0,1-\sigma), (0,\sigma,1-\sigma):\ 0<\sigma<1
\right\}.$$
We extend $\gamma$ to $\mathcal{V}\times SO(3)$ first, and then to $\mathcal{E}\times SO(3)$. 
Let $(A^0,Q)\in \mathcal{V}\times SO(3)$. We set $\gamma(A^0,Q):=+\infty$ and we now show that 
\begin{equation}\label{goal:1}
\lim_{\substack{A\in T\\ A\to A^0}} \gamma(A,Q)=+\infty \quad \text{uniformly in } Q.
\end{equation}
With no loss of generality we can assume that $A^0=e_3$, so that we can restrict our attention to $A=(A_1,A_2,A_3) \in T$ with $0<A_1,A_2<\delta<1/4$.
Then by \eqref{bound:gamma}
$$\gamma(A,Q)\geq C_0\int_{\mathbb{S}^2}\frac{1}{\sqrt{\delta (\omega_1^2+\omega_2^2)+\omega_3^2}}\,d\HH(\omega),$$
where the right hand side goes to $+\infty$, as $\delta$ goes to $0$, which proves \eqref{goal:1}.

Now, let $(A^0,Q^0)\in \mathcal{E}\times SO(3)$. With no loss of generality we can assume that $A^0=(A^0_1,A^0_2,0)$ and $A^0_1,A^0_2>0$. 
Let $(A^n,Q^n)\in \Omega$ be such that $(A^n,Q^n)\to (A^0,Q^0)$ as $n\to\infty$.
We define 
$$\gamma(A^0,Q^0):=\int_{\mathbb{S}^2}\frac{\widehat{\Psi}((Q^0)^T\omega)}{\sqrt{A^0_1\omega_1^2+A^0_2\omega_2^2}}\,d\HH(\omega)<+\infty, $$
and we show that 
\begin{equation}\label{limlim}
\lim_n\gamma(A^n,Q^n)=\gamma(A^0,Q^0).
\end{equation}
Note that, for $\omega\in \mathbb{S}^2$, we have that 
$$
\frac{\widehat{\Psi}((Q^n)^T\omega)}{\sqrt{ D(A^n)\omega\cdot \omega}}\to \frac{\widehat{\Psi}((Q^0)^T\omega)}{\sqrt{A^0_1\omega_1^2+A^0_2\omega_2^2}}
$$
as $n\to\infty$. Let $0<a_0<\min\{A^0_1,A^0_2\}$. Since $A^n\to (A^0_1,A^0_2,0)$, there exists $n_0\in \mathbb{N}$ such that $A^n_1,A^n_2>a_0$
for any $n\geq n_0$, and so
$$
\frac{\widehat{\Psi}((Q^n)^T\omega)}{\sqrt{ D(A^n)\omega\cdot \omega}}\leq \frac{C_1}{\sqrt{a_0\omega_1^2+a_0\omega_2^2}} \in L^1(\mathbb{S}^2).
$$
Hence by the Dominated Convergence Theorem we deduce \eqref{limlim}.

The existence of a minimiser of $\gamma$ on $\overline{\Omega}$ is now established.\smallskip

\noindent
\textit{Step 2.2: The function $\gamma$ attains a global minimiser in $\Omega$.} 
Assume by contradiction that $(A^0,Q^0)\in \partial T\times SO(3)$ is a global minimiser. Clearly, $A^0\in \mathcal{E}$, thus we can assume
without loss of generality that $A^0=(A^0_1,A^0_2,0)$ with $A^0_1,A^0_2>0$.
To reach a contradiction we show that $\gamma$ decreases when moving from $A^0$ towards the interior of $T$, at least close to $A^0$.

Let, as before, $0<a_0<\min\{A^0_1,A^0_2\}$, and for any $0<r<a_0$ let $A^r:=(A^0_1-r/2,A^0_2-r/2,r)$. Note that $A^r\in T$.

We claim that for some $0<\delta_0<a_0$, we have
\begin{equation}\label{claim:decreasing}
\frac{d}{dr}\gamma(A^r,Q^0)<0\quad\text{for any }0<r<\delta_0,
\end{equation}
which contradicts the minimality of $(A^0,Q^0)$, therefore proving that $\gamma$ attains a global minimiser in $\Omega$.
Let $0<r<a_0$. We have
\begin{eqnarray*}
\frac{d}{dr}\gamma(A^r,Q^0) & = & -\frac12\int_{\mathbb{S}^2}\frac{\widehat{\Psi}(Q^T\omega)(-\omega_1^2/2-\omega_2^2/2+\omega_3^2)}{(D(A^r)\omega\cdot \omega)^{3/2}}\,d\HH(\omega)
\\
& = &
-\frac14\int_{\mathbb{S}^2}\frac{\widehat{\Psi}(Q^T\omega)(2-3(\omega_1^2+\omega_2^2))}{(D(A^r)\omega\cdot \omega)^{3/2}}\,d\HH(\omega)
\\
& = &
-\frac12\int_{\mathbb{S}^2}\frac{\widehat{\Psi}(Q^T\omega)}{(D(A^r)\omega\cdot \omega)^{3/2}}\,d\HH(\omega)+\frac34
\int_{\mathbb{S}^2}\frac{\widehat{\Psi}(Q^T\omega)(\omega_1^2+\omega_2^2)}{(D(A^r)\omega\cdot \omega)^{3/2}}\,d\HH(\omega)
\\
& =: & -\beta_1(r)+\beta_2(r).
\end{eqnarray*}
Note that $\beta_1(r), \beta_2(r)>0$. To prove \eqref{claim:decreasing} we show that $\beta_1$ dominates $\beta_2$ for $r$ small enough. 

Using spherical coordinates, the bounds \eqref{bound:What}, and the fact that $A^0_1,A^0_2<1$, we have
$$\beta_1(r)\geq \frac{C_0}2\int_{0}^{2\pi}\int_0^{\pi} \frac{\sin\psi}{(\sin^2\psi+r\cos^2\psi)^{3/2}}\,d\psi\,d\theta.
$$
As $r\to 0^+$, the right hand side goes to $+\infty$, namely
$$\lim_{r\to 0^+}\frac{C_0}2\int_{0}^{2\pi}\int_0^{\pi} \frac{\sin\psi}{(\sin^2\psi+r\cos^2\psi)^{3/2}}\,d\psi\,d\theta=\frac{C_0}2\int_{0}^{2\pi}\int_0^{\pi} \frac1{\sin^2\psi}\,d\psi\,d\theta=+\infty.$$
Hence
\begin{equation}\label{alphaeq}
\lim_{r\to 0^+}\beta_1(r)=+\infty.
\end{equation}
Instead, by the definition of $a_0$ and since $r<a_0$, we have
\begin{equation}\label{betaeq}
0<\beta_2(r)\leq\frac34
\int_{0}^{2\pi}\int_0^{\pi}\frac{C_1\sin^3\psi}{((a_0/2)\sin^2\psi+r\cos^2\psi)^{3/2}}\,d\psi\,d\theta\leq \frac{3\pi^2C_1}{2(a_0/2)^{3/2}}.
\end{equation}
The claim \eqref{claim:decreasing} easily follows by \eqref{alphaeq} and \eqref{betaeq}, and the proof is concluded.\smallskip

\noindent
\textit{Step 3: The first Euler-Lagrange condition implies the second one.}
To conclude we show that, if an ellipsoid $E$ satisfies \eqref{EL1}, then it also satisfies \eqref{EL2}.
In fact, assume that $E$ satisfies \eqref{EL1}. It is enough to show that for any $x\in \R^3$ we have
\begin{equation*}
\nabla P(x)\cdot x=
 \nabla (W\ast \chi) (x)\cdot x +  |x|^2 \geq 0,
\end{equation*}
for $P$ defined in \eqref{potential_EL}.
By \eqref{EL1} this property is obvious for $x\in E$, therefore it is enough to prove it for $x \in \R^3\setminus E$.

Let $x\in \R^3\setminus E$. We write $x=tx^0$ with $t>1$ and $x^0\in\partial E$. By \eqref{potgradbis} and \eqref{EL1}
we know that
\begin{equation}\label{eq10}
-\frac{1}{|B|}\sqrt{\frac{\pi}2}
\int_{\mathbb{S}^2}\alpha^2(x^0,\omega)\frac{\widehat{\Psi}(\omega)}{{|D(a)R^T\omega|}}\,d\mathcal{H}^2(\omega)+|x^0|^2=0,
\end{equation}
where we have used that for $x^0\in\partial E$ we have $|\alpha(x^0,\omega)|<1$ except for $\omega$ on a set of $\mathcal{H}^{2}$-measure zero.
By \eqref{potgradbis} and \eqref{eq10} we deduce that
\begin{align*}
\nabla P(x)\cdot x&  =  -\frac{1}{|B|}\sqrt{\frac{\pi}2}
\int_{\mathbb{S}^2}\alpha^2(x,\omega)\chi_{(-1,1)}(\alpha(x,\omega))\frac{\widehat{\Psi}(\omega)}{{|D(a)R^T\omega|}}\,d\mathcal{H}^2(\omega)+|x|^2\\
& = 
t^2\left[-\frac{1}{|B|}\sqrt{\frac{\pi}2}
\int_{\mathbb{S}^2}\alpha^2(x^0,\omega)\chi_{(-1,1)}(\alpha(x,\omega))\frac{\widehat{\Psi}(\omega)}{{|D(a)R^T\omega|}}\,d\mathcal{H}^2(\omega)+|x^0|^2\right]\\
 &=  t^2\left[\frac{1}{|B|}\sqrt{\frac{\pi}2}
\int_{\mathbb{S}^2}\alpha^2(x^0,\omega)\big(1-\chi_{(-1,1)}(\alpha(x,\omega))\big)\frac{\widehat{\Psi}(\omega)}{{|D(a)R^T\omega|}}\,d\mathcal{H}^2(\omega)\right]\geq 0.
\end{align*}
The proof is complete.
We point out that Step 3 works under the weaker assumption that $\widehat{W}\geq 0$ on $\mathbb{S}^2$.
\end{proof}

\subsection{The loss of dimension in the degenerate case}\label{lossdimsec}

In this section we prove Theorem~\ref{Mainthm}($\ref{degenerate}$) and show that loss of dimension of the minimiser can occur, as claimed in  Remark~\ref{shrinkrem}. 

Let us introduce the following further notation. For a given constant $a_1>0$, we set $S_0(a_1):=[-a_1,a_1]\times\{0\}\times\{0\}$.
A general one-dimensional segment $S$ centred at the origin
can be then obtained by rotating $S_0(a_1)$ with respect to the coordinate axes, namely as
\begin{equation}\label{segmentrot}
S= RS_0(a_1),
\end{equation}
for some rotation $R\in SO(3)$.

We associate to $S_0$ the probability measure
$$\mu_{S_0(a_1)}:= \frac{\pi}{|B|a_1} \left(1-\frac{x_1^2}{a_1^2}\right)\mathcal{H}^1\mres S_0(a_1)\otimes \delta_0(x_2,x_3),$$
and to any segment $S$ as in \eqref{segmentrot} the corresponding measure
\begin{equation}\label{segmentlaw}
\mu_{S}(x):=\mu_{S_0(a_1)}(R^Tx).
\end{equation}

\begin{proof}[Proof of Theorem~\textnormal{\ref{Mainthm}($\ref{degenerate}$)}]
Let $W_0$ be a potential satisfying the assumptions of Theorem~\ref{Mainthm}, with $\widehat{W_0}\geq 0$ on $\mathbb{S}^2$, but not strictly positive.
We denote its profile by $\Psi_0$ and the function defined in \eqref{hatW} by $\widehat{\Psi_0}$. We denote by $I_0$ the energy defined as in \eqref{en:general}, corresponding to the potential $W_0$.

For $\varepsilon>0$, we set
\begin{equation}\label{pert-psi}
\Psi_{\varepsilon}:=\Psi_{0}+\sqrt{\frac{\pi}2}\varepsilon.
\end{equation} 
We call $W_{\varepsilon}$ the corresponding potential as in \eqref{potdef}, and $I_{\varepsilon}$ the corresponding energy. Note that $W_\varepsilon>0$ and that its Fourier transform on $\mathbb{S}^2$ is 
$$
\widehat{\Psi_\varepsilon} 
=\widehat{\Psi_0} + \varepsilon >0,
$$
where $\widehat{\Psi_\varepsilon}$ is defined as in \eqref{hatW}. 
By Theorem~\ref{Mainthm}($\ref{nondegenerate}$) there exists an ellipsoid, that we denote by $E^{\varepsilon}$, such that $\mu_{\varepsilon}=\mu_{E^{\varepsilon}}$ minimises the energy $I_{\varepsilon}$.
As in \eqref{ellrot}, we write $E^{\varepsilon}=R^{\varepsilon}D(a^{\varepsilon})\overline{B}$, where $R^{\varepsilon}\in SO(3)$ and $a^{\varepsilon}=(a_1^{\varepsilon},a_2^{\varepsilon},a_3^{\varepsilon})\in\R^3$ with
$a_i^{\varepsilon}>0$.

By compactness, up to subsequences, we have that, as $\varepsilon\to 0^+$,
$a^{\varepsilon}\to a^0\in [0,+\infty)^{3}$. Depending on the number of zero components of $a^0$ we have the following four cases.
\begin{enumerate}[(1)]
\item $a^{0}=(0,0,0)$. In this case,  as $\varepsilon\to 0^+$, $\mu_{\varepsilon}$ converges narrowly to $\mu_0=\delta_0$.
\item Up to a permutation of the variables, $a^0=(a^0_1,0,0)$ with $a^0_1>0$. In this case 
there exists a segment $S$ as in \eqref{segmentrot} such that,  as $\varepsilon\to 0^+$,
$\mu_{\varepsilon}$ converges narrowly to $\mu_0=\mu_S$ as in \eqref{segmentlaw}.

\item Up to a permutation of the variables, $a^0=(a^0_1,a^0_2,0)$ with $a^0_1,a^0_2>0$. In this case
there exists an ellipse $\tilde{E}$ as in \eqref{ellipserot} such that,  as $\varepsilon\to 0^+$,
$\mu_{\varepsilon}$ converges narrowly to $\mu_0=\mu_{\tilde{E}}$ as in \eqref{ellipselaw}.

\item $a^0=(a^0_1,a^0_2,a^0_3)$ with $a^0_1,a^0_2,a^0_3>0$. In this case, 
there exists an ellipsoid $E$ as in \eqref{ellrot} such that,  as $\varepsilon\to 0^+$, 
$\mu_{\varepsilon}$ converges to $\mu_0=\mu_E=\chi_{E}/|E|$
 as in \eqref{ellipsoidlaw}.
\end{enumerate}

It is easy to see that
$$I_0(\mu_0)\leq \liminf_{\varepsilon\to 0^+}I_{\varepsilon}(\mu_{\varepsilon})$$
and that
$$I_0(\mu)=\lim_{\varepsilon\to 0^+}I_{\varepsilon}(\mu)\quad\text{for any }\mu\in\mathcal{P}(\R^3).$$
Hence $\mu_0$ is the unique minimiser of $I_0$.
In order to conclude the proof, it remains to show that cases (1) and (2) cannot occur. This can be readily seen by showing that, in both cases,
$I_0(\mu_0)=+\infty$. This is trivial for case (1), since
$I_0(\delta_0)=W_0(0)=+\infty$. For case (2), it is enough to prove that
$I_{\textrm{C}}(\mu_{S_0(a_1)})=+\infty$ for any $a_1>0$, where $I_{\textrm{C}}$ is the Coulomb energy corresponding to $W_{\textrm{C}}(x)=1/|x|$. In fact,  $I_0$ is bounded from below by a positive multiple of $I_{\textrm{C}}$. 

For the Coulomb energy we have
$$
I_{\textrm{C}}(\mu_{S_0(a_1)})\geq \int_{-a_1}^{a_1}\left(\int_{\R^3} \frac{1}{|x_1e_1-y|}d\mu_{S_0(a_1)}(y)\right)\frac{\pi}{|B|a_1} \left(1-\frac{x_1^2}{a_1^2}\right)dx_1
=+\infty,
$$
where we have used that for any $|x_1|<a_1$
$$\int_{\R^3} \frac{1}{|x_1e_1 -y|}d\mu_{S_0(a_1)}(y)=\int_{-a_1}^{a_1} \frac{1}{|x_1-y_1|}\frac{\pi}{|B|a_1} \left(1-\frac{y_1^2}{a_1^2}\right)   dy_1=+\infty.$$
This completes the proof.
\end{proof}

We now provide an explicit example where case (3) in the proof of Theorem~\textnormal{\ref{Mainthm}($\ref{degenerate}$)} occurs.

\begin{example}[Loss of dimension for the minimiser]\label{ex:loss-dim}
Let us consider the profile 
$$
\Psi_0(x_1,x_2,x_3)= \frac{1}{2} \sqrt{\frac{\pi}{2}} \big(4+3 x_3^2 + 3 x_3^4\big), \quad x \in  \mathbb{S}^2.
$$
We will see below in Remark \ref{rem:quad} that loss of dimensionality cannot occur for quadratic profiles, hence a profile providing an example of a degenerate minimiser has to be a polynomial of degree at least four.

Clearly $\Psi_0$ is even and strictly positive. To compute $\widehat{\Psi_0}$, we express $\Psi_0$ in terms of spherical harmonics.
The homogeneous polynomials
\begin{equation*}
P_0(x) =1, \quad\quad P_2(x)=3x_3^2 -|x|^2, \quad \quad P_4(x)=35x_3^4 -30x_3^2 |x|^2 +3 |x|^4 
\end{equation*}
are harmonic and so, being of different degrees,  their restrictions to the unit sphere
\begin{equation*}
P_0(x) =1, \quad\quad\quad P_2(x)=3x_3^2 -1,  \quad \quad   P_4(x)=35x_3^4 -30x_3^2  +3 
\end{equation*}
are orthogonal spherical harmonics. It is easily seen that $\Psi_0$ is a linear combination of $P_0$, $P_2$, and $P_4$. More precisely,
$$
\Psi_0(x) = \sqrt{\frac{\pi}{2}}\frac{1}{35} \left( 98 +\frac{65}{2} P_2(x)  +\frac{3}{2}P_4(x) \right), \quad x \in  \mathbb{S}^2.
$$
The polynomials $P_0, P_2, P_4$ are eigenvectors of the mapping $\Psi \mapsto \widehat{\Psi}$
with eigenvalues $\sqrt{\frac{2}{\pi}}, -2 \sqrt{\frac{2}{\pi}}, \frac{8}{3} \sqrt{\frac{2}{\pi}}$, respectively. Hence
\begin{eqnarray*}
\widehat{\Psi_0}(\xi)& = &\frac{1}{35} \left(98 -65 P_2(\xi) + 4 P_4(\xi)\right)
\\
&=&
5-9\xi_3^2+4\xi_3^4=(1-\xi_3^2)+4(1-\xi_3^2)^2=(\xi_1^2+\xi_2^2)+4(\xi_1^2+\xi_2^2)^2, \quad \xi\in \mathbb{S}^2,
\end{eqnarray*}
which is nonnegative on $\mathbb{S}^2$, but not strictly positive. Hence the potential $W_0$ associated with the profile $\Psi_0$ via \eqref{potdef} satisfies the assumptions of Theorem~\textnormal{\ref{Mainthm}($\ref{degenerate}$)}.

Let $I_0$ be the energy corresponding to the potential $W_0$. We claim that the minimiser of $I_0$ is not the normalised characteristic function of an ellipsoid and so, by Theorem~\ref{Mainthm}, it is of the form \eqref{ellipselaw}.  

First of all, if the minimiser were of the form $\chi_{E}/|E|$, with $E$ an ellipsoid as in \eqref{ellrot}, then by symmetry and uniqueness 
$E$ would be a spheroid of equation
$$
\frac{x_1^2}{a^2_0}+\frac{x_2^2}{a^2_0}+\frac{x_3^2}{b^2_0}\leq 1
$$
for some $a_0>0$ and $b_0>0$. Moreover, from the first Euler-Lagrange condition for minimality for $\chi_{E}/|E|$, $a_0$ and $b_0$ would satisfy the system of equations \eqref{EL1-FourierM}, with $M=D(a_0^2,a_0^2,b_0^2)$, and in particular
$$
\int_{\S^2} \frac{(\omega_1^2+\omega_2^2-2\omega_3^2)\widehat{\Psi_0}(\omega)}{(M\omega\cdot \omega)^{3/2}} 
\, d\HH(\omega) = 0.
$$
Using spherical coordinates and the expression of $\widehat{\Psi_0}$, this condition can be written as
$$
0=\int_0^\pi\frac{(\sin^2\psi-2\cos^2\psi)(\sin^2\psi+4\sin^4\psi)}{(a_0^2\sin^2\psi+b_0^2\cos^2\psi)^{3/2}} 
\sin\psi\, d\psi = \frac{p(t_\ast)}{a_0^3},
$$
where $t_\ast:=b_0^2/a_0^2$ and the function $p:[0,+\infty)\to\R$ is defined as
$$
p(t):=\int_0^\pi\frac{(\sin^2\psi-2\cos^2\psi)(\sin^2\psi+4\sin^4\psi)}{(\sin^2\psi+t\cos^2\psi)^{3/2}} 
\sin\psi\, d\psi
$$
for every $t\geq0$. 

We now show that $p(t)\neq0$ for every $t>0$, which implies that the minimiser of $I_0$ is not the normalised characteristic function of an ellipsoid. 

To this aim, we first note that $p$ is a continuous function in $[0,+\infty)$ and
$$
p(0)=\int_0^\pi (\sin^2\psi-2\cos^2\psi)(1+4\sin^2\psi) \, d\psi=0.
$$
Moreover, it is easy to show that the following formula holds:
$$
(2+t)p'(t) = -\frac32 p(t)+ \frac32 \int_0^\pi\frac{(\sin^2\psi-2\cos^2\psi)^2(\sin^2\psi+4\sin^4\psi)}{(\sin^2\psi+t\cos^2\psi)^{5/2}} 
\sin\psi\, d\psi
$$
for every $t>0$.
From this equation one can immediately see that $p'(t)>0$ for $t\in(0,\varepsilon_0)$ for some $\varepsilon_0>0$, hence $p(t)>0$ for $t\in(0,\varepsilon_0)$. Moreover, if $p(t_0)=0$ for some $t_0>0$, then $p'(t_0)>0$.
We conclude that $p(t)=0$ only at $t=0$ and this proves the claim.
\end{example}

\begin{remark}[Quadratic profiles]\label{rem:quad}
We now show that for any quadratic profile $\Psi(x)=c_0+P_2(x)$, where $c_0$ is a constant and $P_2$ is a homogeneous polynomial of degree $2$,
with $\Psi>0$ and $\widehat{\Psi}\geq 0$ on $\mathbb{S}^2$ the minimiser of the corresponding energy has always a full-dimensional support, and hence loss of dimension does not occur in this case. This extends the result of \cite{CMMRSV2}, where the profile $\Psi_\alpha(x)=1+\alpha x_1^2 $ was considered for $\alpha\in (-1,1]$.

Since $c_0=c_0|x|^2$ for $x\in\mathbb{S}^2$ and by means of a diagonalisation procedure,
we can always reduce to the canonical form where $\Psi(x)=\alpha_1x_1^2+\alpha_2x_2^2+\alpha_3 x_3^2$, $x\in\mathbb{S}^2$.
The assumption $\Psi>0$ on $\mathbb{S}^2$ corresponds to $\alpha_i>0$ for $i=1,2,3$. 

To compute $\widehat{\Psi}$ it is convenient to rewrite $\Psi$ as a sum of homogeneous harmonic polynomials 
$$
\Psi(x) = \frac13\sum_{i=1}^3\alpha_i+\frac{1}3\sum_{i=1}^3 \alpha_i\bigg(2x_i^2-\sum_{j\neq i}x_j^2\bigg).
$$
Then, by \eqref{phievenbis} and \eqref{bcoeff} we have that for $\xi\in \mathbb{S}^2$
\begin{align}\label{Psi-quad}
\widehat{\Psi}(\xi) &= \frac13b_0\sum_{i=1}^3\alpha_i+\frac{1}3b_2\sum_{i=1}^3 \alpha_i\bigg(2\xi_i^2-\sum_{j\neq i}\xi_j^2\bigg)
\nonumber\\
&=\sqrt{\frac{2}{\pi}} \sum_{i=1}^3 \xi_i^2\bigg(-\alpha_i+\sum_{j\neq i}\alpha_j\bigg).
\end{align}
Note that the condition $\widehat{\Psi}\geq 0$ on $\mathbb{S}^2$ is guaranteed by requiring that $-\alpha_i+\sum_{j\neq i}\alpha_j\geq 0$ for every $i=1,2,3$. 

To analyse the case of a degenerate Fourier transform we assume, with no loss of generality, that the coefficient of $\xi_3^2$ in \eqref{Psi-quad} is zero, namely, that $\alpha_3=\alpha_1+\alpha_2$. In this case we have that 
$$
\Psi(x)=  \alpha_1x_1^2+\alpha_2x_2^2+(\alpha_1+\alpha_2) x_3^2
$$
and
$$
\widehat{\Psi}(\xi)=2\sqrt{\frac{2}{\pi}} \big(\alpha_2 \xi_1^2 + \alpha_1 \xi_2^2 \big).
$$
Under the assumption that $\alpha_1>0$ and $\alpha_2>0$ the profile $\Psi$ then satisfies the assumptions of Theorem~\textnormal{\ref{Mainthm}($\ref{degenerate}$)}. 

Let $I$ be the energy corresponding to the quadratic profile $\Psi$. We claim that the minimiser of $I$ is the normalised characteristic function of a non-degenerate ellipsoid. To prove the claim, we first note that, if the minimiser had a lower dimensional support, then the support would be in the plane orthogonal to the $x_3$ axis, since the coefficient of $x_3^2$  in the expression of $\Psi$ is larger than the coefficients of $x_1^2$ and $x_2^2$. Hence it remains to show that a semi-ellipsoid law supported in the $x_1x_2$-plane cannot be a minimiser.

We define a perturbed potential $\Psi_\varepsilon$ as in \eqref{pert-psi} and write the system \eqref{EL1-Fourier} for $\Psi_\varepsilon$. We know that the system has a solution $M_{\varepsilon}=\mathrm{diag}(A^{\varepsilon})$ with $A^{\varepsilon}=(A_1^{\varepsilon},A_2^{\varepsilon},A_3^{\varepsilon})\in\R^3$ such that
$A_i^{\varepsilon}=(a_i^{\varepsilon})^2>0$, $a_i^{\varepsilon}$ being the semi-axes of the minimising ellipsoid.

In particular, we have that for any $\varepsilon>0$
$$0=\int_{\mathbb{S}^2}\frac{(\omega_1^2-\omega_3^2)\widehat{\Psi_{\varepsilon}}(\omega)}{( A_1^{\varepsilon}\omega_1^2+A_2^{\varepsilon} \omega_2^2+A_3^{\varepsilon}\omega_3^2)^{3/2}}\,d\mathcal{H}^2(\omega)=
\int_{\mathbb{S}^2}\frac{(\omega_1^2-\omega_3^2)(\widehat{\Psi}(\omega)+\varepsilon)}{( A_1^{\varepsilon}\omega_1^2+A_2^{\varepsilon} \omega_2^2+A_3^{\varepsilon}\omega_3^2)^{3/2}}\,d\mathcal{H}^2(\omega).
$$
By contradiction, we assume that $(a_1^{\varepsilon},a_2^{\varepsilon},a_3^{\varepsilon})\to (a_1,a_2,0)$, as $\varepsilon \to 0^+$, with $a_1>0$, $a_2>0$. 
Setting $A_i:=a_i^2$, by the Dominated Convergence Theorem we have that
$$
\int_{\mathbb{S}^2}\frac{(\omega_1^2-\omega_3^2)\widehat{\Psi}(\omega)}{( A_1^{\varepsilon}\omega_1^2+A_2^{\varepsilon} \omega_2^2+A_3^{\varepsilon}\omega_3^2)^{3/2}}\,d\mathcal{H}^2(\omega)\to 
\int_{\mathbb{S}^2}\frac{(\omega_1^2-\omega_3^2)\widehat{\Psi}(\omega)}{( A_1\omega_1^2+A_2 \omega_2^2)^{3/2}}\,d\mathcal{H}^2(\omega)
$$
and
$$
\int_{\mathbb{S}^2}\frac{\omega_1^2}{( A_1^{\varepsilon}\omega_1^2+A_2^{\varepsilon} \omega_2^2+A_3^{\varepsilon}\omega_3^2)^{3/2}}\,d\mathcal{H}^2(\omega)\to 
\int_{\mathbb{S}^2}\frac{\omega_1^2}{( A_1\omega_1^2+A_2 \omega_2^2)^{3/2}}\,d\mathcal{H}^2(\omega).
$$
Since the remaining term
$$
- \int_{\mathbb{S}^2}\frac{\varepsilon \omega_3^2}{( A_1^{\varepsilon}\omega_1^2+A_2^{\varepsilon} \omega_2^2+A_3^{\varepsilon}\omega_3^2)^{3/2}}\,d\mathcal{H}^2(\omega)
$$
is negative, we conclude that
$$0=\lim_{\varepsilon\to 0^+}\int_{\mathbb{S}^2}\frac{(\omega_1^2-\omega_3^2)\widehat{\Psi_{\varepsilon}}(\omega)}{( A_1^{\varepsilon}\omega_1^2+A_2^{\varepsilon} \omega_2^2+A_3^{\varepsilon}\omega_3^2)^{3/2}}\,d\mathcal{H}^2(\omega)\leq
\int_{\mathbb{S}^2}\frac{(\omega_1^2-\omega_3^2)\widehat{\Psi}(\omega)}{( A_1\omega_1^2+A_2 \omega_2^2)^{3/2}}\,d\mathcal{H}^2(\omega)=\frac{p(t_\ast)}{a_2^3}
$$
where $t_\ast:=a_1^2/a_2^2$ and the function $p:(0,+\infty)\to\R$ is defined as
$$
p(t):= \int_{\mathbb{S}^2}\frac{(\omega_1^2-\omega_3^2)\widehat{\Psi}(\omega)}{(t \omega_1^2+ \omega_2^2)^{3/2}}\,d\mathcal{H}^2(\omega)
$$
for every $t>0$. By using spherical coordinates and the explicit expression of $\widehat{\Psi}$ we have that 
\begin{align*}
p(t) &= 2\sqrt{\frac{2}{\pi}}\int_{\mathbb{S}^2}\frac{(\omega_1^2-\omega_3^2)\big(\alpha_2 \omega_1^2 + \alpha_1 \omega_2^2 \big)}{(t \omega_1^2+ \omega_2^2)^{3/2}}\,d\mathcal{H}^2(\omega)\\
&=2\sqrt{\frac{2}{\pi}}\int_0^{2\pi}\int_0^\pi \frac{(\sin^2\psi\cos^2\theta-\cos^2\psi)\big(\alpha_2 \cos^2\theta + \alpha_1 \sin^2\theta \big)}{(t \cos^2\theta+ \sin^2\theta)^{3/2}}\,d\psi\, d\theta\\
&=\sqrt{2\pi}\int_0^{2\pi} \frac{\alpha_2 \cos^2\theta + \alpha_1 \sin^2\theta}{(t \cos^2\theta+ \sin^2\theta)^{3/2}}\,(\cos^2\theta-1)\,d\theta<0.
\end{align*}
Hence $p(t)<0$ for any $t>0$ and we obtain a contradiction.
\end{remark}

\bigskip

\noindent
\textbf{Acknowledgements.} MGM is member of GNAMPA--INdAM. 
JM and JV acknowledge support from the grants PID2020-112881GB-I00 and Severo Ochoa and Maria de Maeztu CEX2020-001084-M. 
MGM acknowledges support by MIUR--PRIN 2017. LR acknowledges support by  MIUR--PRIN 2017 project  201758MTR2 and by GNAMPA-INdAM through 2022 projects. LS acknowledges support by the EPSRC under the grants EP/V00204X/1 and EP/V008897/1. Part of this work was done during a visit of MGM, LR, and LS to the Universitat Aut\`onoma de Barcelona, whose kind hospitality is gratefully acknowledged.


\medskip

\end{document}